\documentclass[preprint,12pt]{elsarticle}


\usepackage{graphicx}
\usepackage[cp1251]{inputenc}
\usepackage{amssymb}
\usepackage{amsthm}
\usepackage{cmap}
\usepackage{picture}

\usepackage{multicol}


\usepackage{amsmath}



\biboptions{sort&compress}

\newcommand{\sysn}{\left\{\begin{array}{rcl}}
\newcommand{\sysk}{\end{array}\right.}

\newtheorem{theorem}{Theorem}[section]
\newtheorem{lemma}[theorem]{Lemma}

\theoremstyle{example}

\newtheorem{proposition}[theorem]{Proposition}

\newtheorem{corollary}[theorem]{Corollary}

\theoremstyle{definition}
\newtheorem{definition}[theorem]{Definition}

\newtheorem{question}[theorem]{Question}


\journal{...}

\begin{document}

\title{Resolvability in products of spaces of small cardinality}

\author{Anton Lipin}

\address{Krasovskii Institute of Mathematics and Mechanics, \\ Ural Federal
 University, Yekaterinburg, Russia}

\ead{tony.lipin@yandex.ru}

\begin{abstract} We prove that:
	
	I. The product of any two regular isodyne spaces of cardinality $\omega_1$ is $\omega$-resolvable;
	
	II. The product of any $n + 2$ Hausdorff isodyne spaces of cardinality $\omega_n$ is $\omega$-resolvable.
\end{abstract}


\begin{keyword} resolvability, product of topological spaces

\MSC[2020] 54A25, 54B10

\end{keyword}

\maketitle 


\section{Introduction}

Suppose $\kappa$ is a cardinal.
A topological space $X$ is called $\kappa${\it -resolvable} iff there are $\kappa$-many pairwise disjoint dense subsets of $X$.
A $2$-resolvable space is called {\it resolvable}, whereas a crowded (i.e. without isolated points) not resolvable space is called {\it irresolvable}.
A space $X$ is called {\it maximally resolvable} iff $X$ is $\Delta(X)$-resolvable, where $\Delta(X)$ is the {\it dispersion character} of $X$, i.e. the smallest cardinality of a nonempty open subset of $X$.
An irresolvable space which contains no resolvable subspaces is called {\it hereditary irresolvable} \cite{Pavlov2007}.

Recently we proved that the product of each two countable crowded Hausdorff spaces is $\omega$-resolvable \cite{Lipin2025}.
Now we generalize the used approach in two different ways. It turns out that the condition of countability of factors can be slightly weakened, providing that they are regular (Theorem \ref{TReg}) or that we multiply more than two spaces (Theorem \ref{TCube}).

\section{Preliminaries}

We need the following classic results.

\begin{proposition}[J.G.~Ceder, \cite{Ceder1964}]\label{PCeder}
	If every nonempty open subset of a space $X$ contains a $\kappa$-resolvable subspace, then the space $X$ is $\kappa$-resolvable.
\end{proposition}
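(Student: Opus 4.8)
The plan is to produce the $\kappa$ required dense sets by slicing a single maximal disjoint family of $\kappa$-resolvable subspaces. First I would invoke Zorn's lemma to fix a family $\mathcal{Y} = \{Y_\alpha : \alpha \in A\}$ of pairwise disjoint $\kappa$-resolvable subspaces of $X$ that is maximal (with respect to inclusion) among all pairwise disjoint families of $\kappa$-resolvable subspaces. The hypotheses of Zorn's lemma are routine here: the union of a chain of such families is again a pairwise disjoint family whose members are $\kappa$-resolvable, and the poset is nonempty (the empty family qualifies vacuously, and since $X$ is itself a nonempty open set it contains a $\kappa$-resolvable subspace, so the maximal family is in fact nonempty).

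The first substantive step is to show that $D := \bigcup_{\alpha \in A} Y_\alpha$ is dense in $X$. If it were not, the open set $V = X \setminus \overline{D}$ would be nonempty, so by hypothesis it would contain a $\kappa$-resolvable subspace $Y$. Since $Y \subseteq V$ is disjoint from $D$, adjoining $Y$ to $\mathcal{Y}$ would yield a strictly larger pairwise disjoint family of $\kappa$-resolvable subspaces, contradicting maximality. Next, for each $\alpha$ I would use the $\kappa$-resolvability of $Y_\alpha$ to fix a partition $Y_\alpha = \bigcup_{\xi < \kappa} D_\alpha^\xi$ into $\kappa$-many pairwise disjoint subsets, each dense in $Y_\alpha$. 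Then for every $\xi < \kappa$ I set $E_\xi := \bigcup_{\alpha \in A} D_\alpha^\xi$. These sets are pairwise disjoint: two members $D_\alpha^\xi$ and $D_\beta^\eta$ are disjoint either because $\alpha \neq \beta$ (whence $Y_\alpha \cap Y_\beta = \emptyset$) or because $\alpha = \beta$ and $\xi \neq \eta$ (whence the slices of $Y_\alpha$ are disjoint).

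It remains to check that each $E_\xi$ is dense in $X$, and this is the one place requiring care. Because $D_\alpha^\xi$ is dense in $Y_\alpha$, its closure in $X$ contains $Y_\alpha$, so by monotonicity of closure $\overline{E_\xi} \supseteq \overline{D_\alpha^\xi} \supseteq Y_\alpha$ for every $\alpha$; taking the union over $\alpha$ gives $\overline{E_\xi} \supseteq D$, and since $D$ is dense we conclude $\overline{E_\xi} = X$. Thus $\{E_\xi : \xi < \kappa\}$ is a family of $\kappa$-many pairwise disjoint dense subsets of $X$, which is exactly what $\kappa$-resolvability demands. The main obstacle, such as it is, lies in this final density verification: one must keep track of the fact that each slice $D_\alpha^\xi$ is dense only within its own $Y_\alpha$, and it is precisely the maximality-driven density of $D$ that upgrades ``dense in each $Y_\alpha$'' to ``dense in $X$.''
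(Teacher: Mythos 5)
The paper does not prove this proposition at all---it cites it as a classical result of Ceder---so there is no in-paper argument to compare against; your proof is correct and is in fact the standard proof of Ceder's lemma (a maximal pairwise disjoint family of $\kappa$-resolvable subspaces via Zorn's lemma, density of its union forced by maximality, and slice-wise unions $E_\xi$ of the dense pieces, whose density in $X$ follows exactly as you argue). The only cosmetic slip is calling the $\kappa$-many disjoint dense subsets of $Y_\alpha$ a ``partition'': $\kappa$-resolvability does not require them to cover $Y_\alpha$, but since your argument never uses the covering property (or one may enlarge one dense piece by the leftover points), nothing is affected.
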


\begin{corollary}\label{CCeder}$\empty$
	Suppose $X$ and $Y$ are spaces and for all nonempty open sets $U \subseteq X$ and $V \subseteq Y$ there is a $\kappa$-resolvable subspace of $U \times V$.
	Then $X \times Y$ is $\kappa$-resolvable.
\end{corollary}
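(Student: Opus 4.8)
The plan is to apply Ceder's Proposition~\ref{PCeder} directly to the product space $X \times Y$. By that proposition, it suffices to verify its hypothesis for $X \times Y$, namely that every nonempty open subset $W \subseteq X \times Y$ contains a $\kappa$-resolvable subspace. So the whole corollary reduces to unwinding the product topology inside an arbitrary basic neighborhood.

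First I would fix an arbitrary nonempty open set $W \subseteq X \times Y$ and choose a point $(x,y) \in W$. Since the rectangles $U \times V$, with $U$ open in $X$ and $V$ open in $Y$, form a base for the product topology, there exist nonempty open sets $U \subseteq X$ and $V \subseteq Y$ with $(x,y) \in U \times V \subseteq W$. By the hypothesis of the corollary applied to this pair $U, V$, there is a $\kappa$-resolvable subspace $S \subseteq U \times V$.

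It remains to observe that $S$ is then a $\kappa$-resolvable subspace of $X \times Y$ lying inside $W$. This follows from transitivity of the subspace topology: since $U \times V \subseteq W \subseteq X \times Y$, the set $S$ inherits the same topology whether regarded as a subspace of $U \times V$ or of $X \times Y$, so its $\kappa$-resolvability is preserved. Hence every nonempty open subset of $X \times Y$ contains a $\kappa$-resolvable subspace, and Proposition~\ref{PCeder} yields that $X \times Y$ is $\kappa$-resolvable.

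This is essentially a routine reduction rather than a substantive argument, so I do not expect a genuine obstacle. The only two points requiring any care are that rectangles $U \times V$ form a base (so that the hypothesis, stated only for such rectangles, actually applies inside every open $W$) and the transitivity of the subspace topology (so that ``$\kappa$-resolvable subspace of $U \times V$'' upgrades to ``$\kappa$-resolvable subspace of $X \times Y$''); both are standard.
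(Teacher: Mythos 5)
Your proof is correct and matches the paper's intent exactly: the corollary is stated without proof there precisely because it is this routine application of Proposition~\ref{PCeder} to the product, using that open rectangles $U \times V$ form a base for $X \times Y$. Both steps you flag (the base argument and transitivity of subspace resolvability) are handled correctly, so there is nothing to add.
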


\begin{theorem}[Illanes, Bhaskara Rao, \cite{Illanes1996, BhaskaraRao2019}]\label{TIBR}
	If a space $X$ is $n$-resolvable for every $n < \omega$, then $X$ is $\omega$-resolvable.
\end{theorem}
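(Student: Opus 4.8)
The easy direction is immediate: $\omega$ pairwise disjoint dense sets give $n$ of them for each finite $n$. So the whole content lies in the converse, and the plan is to manufacture a single family of $\omega$ pairwise disjoint dense sets out of the separately given finite resolutions. The cleanest vehicle is a refining binary tower of dense sets: a family $\{D_s : s \in 2^{<\omega}\}$ with $D_\varnothing = X$ in which the two children $D_{s0}, D_{s1}$ partition their parent $D_s$, both dense. From such a tower the sets $D_{0^{j}1}$ for $j \ge 0$ are automatically pairwise disjoint and each dense, so building the tower would yield $\omega$-resolvability at once. Equivalently, and this is all I really need, it would suffice to produce a decreasing chain $X = R_0 \supseteq R_1 \supseteq \cdots$ of dense sets with each $R_{n-1} = E_n \sqcup R_n$ a partition into two dense sets; the $E_n$ are then the desired disjoint dense sets.

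The basic tool I would use is a grouping observation: if $X$ is $(km)$-resolvable then, grouping a partition into $km$ dense sets into $k$ blocks of size $m$, one gets a partition of $X$ into $k$ dense sets, each a union of $m$ dense sets and hence itself $m$-resolvable; in particular every internal block, being a union of at least two dense sets, splits into two dense sets. Consequently a single partition into $2^N$ dense sets unfolds into a refining tower of all-dense partitions of height $N$, so finite approximations to the desired tower exist of every height. I would also record that the hypothesis is open-hereditary — the trace of an $m$-resolution of $X$ on any nonempty open $U$ is an $m$-resolution of $U$ — so by Proposition \ref{PCeder} it is legitimate, where convenient, to pass to a basic open set and argue locally.

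The hard part, and the entire content of the theorem, is passing from finite towers of every height to one infinite tower, i.e.\ maintaining the chain $R_n$ forever. The difficulty is that the resolutions into $2^N$ dense sets are chosen independently and need not be compatible, and that a leaf of a finite tower is a single dense set which, as a subspace, need not be resolvable — so a given tower cannot simply be extended downward, and there is no canonical ``scale'' one could attach to a point in order to splice the independent finite resolutions into one $\omega$-colouring. All of this concentrates into a single refinement question: given a partition of $X$ into finitely many dense sets, can one always \emph{properly} refine it into a partition with more dense pieces? If the answer were yes for every such partition I would build a strictly refining all-dense tower level by level and extract the $E_n$ as above; the obstruction is precisely that some block may be an irresolvable dense subspace. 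The theorem is nonetheless true because one is free to \emph{choose} which resolution to refine: using a fresh, much larger resolution one reshuffles the current blocks into splittable ones, and the real work — which I expect to be the main obstacle, and which is the combinatorial heart of the arguments of Illanes and of Bhaskara Rao — is to carry out this reshuffling coherently at every level so that the remainder stays resolvable at each of the infinitely many steps. I would isolate this as a lemma asserting that the remainder can always be kept resolvable, proving it by a pigeonhole/averaging argument across the blocks of a sufficiently fine resolution and falling back on the localisation afforded by Proposition \ref{PCeder} wherever a direct global argument becomes awkward.
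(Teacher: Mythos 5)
Your reduction is sound as far as it goes: the chain $X = R_0 \supseteq R_1 \supseteq \cdots$ with each $R_{n-1} = E_n \sqcup R_n$ split into two dense pieces does yield $\omega$ pairwise disjoint dense sets; the grouping observation (a $(km)$-resolution groups into $k$ dense, $m$-resolvable blocks) is correct; and so is the remark that the hypothesis is open-hereditary. But the argument stops exactly where the theorem begins. The lemma you propose to ``isolate'' --- that the remainder can be kept splittable (indeed $n$-resolvable for all $n$) through each of the infinitely many stages --- is not a stepping stone toward the theorem; it is equivalent to it. Given the theorem, take $A = D_1$ and the remainder $\bigcup_{k \ge 2} D_k$ is even $\omega$-resolvable; given the lemma, your recursion gives the theorem. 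So the proposal reduces the statement to an equivalent restatement of itself and then defers that restatement with a promissory note.

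Moreover, the mechanism you gesture at --- ``pigeonhole/averaging across the blocks of a sufficiently fine resolution'' --- founders on precisely the obstruction you yourself describe. For each fixed $m$, grouping a $2m$-resolution gives a partition of $X$ into two dense $m$-resolvable halves, but this partition depends on $m$, and no compactness principle produces a single partition good for all $m$ simultaneously: the tree of finite all-dense towers is infinitely branching, so K\"onig's lemma does not apply, and an ultrafilter average of the partitions $(A_m, B_m)$ need not have dense, let alone resolvable, pieces. Overcoming the incompatibility of the independently chosen finite resolutions is the actual mathematical content of the cited papers of Illanes and of Bhaskara Rao (the implication had been an open question for decades before Illanes' paper), and neither of their arguments is a pigeonhole refinement of the naive tower. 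Note finally that the paper you are working from offers no proof to compare against: it cites the theorem and uses it as a black box (in the proof of Lemma \ref{Pomega_to_irres}), which is also the appropriate course for you unless you reproduce one of the cited arguments in full.
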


A space $X$ is called {\it isodyne} iff all nonempty open subsets of $X$ have cardinality $|X|$.
Every nonempty space has a nonempty open isodyne subspace, thus by Corollary \ref{CCeder} one may reduce the research in resolvability of products to the case of isodyne factors.

The symbols $\bigsqcup, \sqcup$ in the following proof and throughout the paper mean the {\it disjoint union}, i.e. the union of pairwise disjoint sets.

\begin{lemma}\label{Pomega_to_irres}
	If an infinite isodyne space $X$ is not $\omega$-resolvable, then $X$ contains a hereditary irresolvable isodyne subspace of cardinality $|X|$.
\end{lemma}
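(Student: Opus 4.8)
The plan is to reduce the statement, via the two Ceder principles and the Illanes--Bhaskara Rao theorem, to a single point-set construction, and then to isolate that construction as the hard core. First I would record that $X$ is crowded: an isolated point would be a nonempty open set of cardinality $1$, contradicting the isodyne property together with $|X|\ge\omega$. Write $\lambda=|X|$. Since $X$ is not $\omega$-resolvable, Theorem \ref{TIBR} gives an $n$ with $X$ not $n$-resolvable, and the contrapositive of Proposition \ref{PCeder} (with $\kappa=n$) produces a nonempty open set containing no $n$-resolvable subspace. Hence there is a least integer $k$ for which some nonempty open set $U$ contains no $k$-resolvable subspace; since every nonempty space is trivially $1$-resolvable, $k\ge 2$. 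By minimality of $k$, every nonempty open set has a $(k-1)$-resolvable subspace, so Proposition \ref{PCeder} makes every nonempty open subset of $U$ itself $(k-1)$-resolvable. Passing to $U$ (open, hence isodyne of cardinality $\lambda$ and crowded) I may assume that every nonempty open subset of $X$ is $(k-1)$-resolvable, while no subspace of $X$ is $k$-resolvable. If $k=2$ this says exactly that $X$ is hereditarily irresolvable, and $X$ itself is the required subspace; so assume $k\ge 3$.

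Next I would reduce the whole lemma to the following local core: \emph{there is a nonempty open $U$ and a subspace $Z\subseteq U$ that is irresolvable and $\lambda$-dense in $U$}, i.e.\ $|Z\cap O|=\lambda$ for every nonempty open $O\subseteq U$. Granting this, the contrapositive of Proposition \ref{PCeder} applied to $Z$ (with $\kappa=2$) yields a relatively open $W=O\cap Z$ containing no resolvable subspace, that is, a hereditarily irresolvable $W$; and $W$ inherits $\lambda$-density, since $|W\cap O'|=|Z\cap O'|=\lambda$ for open $O'\subseteq O$. Finally, any $\lambda$-dense subspace of an open (hence isodyne) set is itself isodyne of cardinality $\lambda$: a nonempty relatively open subset of $W$ has the form $O'\cap W$ with $O'\cap U\neq\varnothing$, whence $|O'\cap W|=\lambda$. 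Thus $W$ is a hereditarily irresolvable isodyne subspace of cardinality $\lambda=|X|$, as required.

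It remains to construct the $\lambda$-dense irresolvable $Z$, and this is the step I expect to be the main obstacle. The fuel is the reduced structure: decomposing any nonempty open $O$ into $k-1$ dense pieces and using that no subspace is $k$-resolvable, each piece is irresolvable (a bisection of a piece into two dense halves would, by transitivity of density, give $k$ pairwise disjoint dense subsets of $O$). Hence \emph{every} nonempty open set carries an irresolvable subspace from which points may be drawn. I would then fix a $\pi$-base of $U$ and run a transfinite recursion, at each step adjoining fresh points taken from such irresolvable pieces so as to meet every basic open set in $\lambda$ points (securing $\lambda$-density, hence full cardinality and the isodyne property) while maintaining an invariant---for instance a descending filter base of relatively dense traces, or an almost-disjoint bookkeeping of the chosen points---that forbids any future $2$-colouring of $Z$ from producing two dense halves (securing irresolvability).

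The tension lies exactly here: $\lambda$-density demands many points in every open set, whereas irresolvability forbids splitting those points into two dense subsets, so the recursion must add points generously yet in a controlled, ``one-sided'' fashion. Managing the bookkeeping when the $\pi$-weight of $U$ exceeds $\lambda$ is the delicate technical point, since then the enumeration of basic open sets outruns the target cardinality and one must recycle dense traces carefully; everything preceding and following this construction is routine assembly through Ceder's principles.
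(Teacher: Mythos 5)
Your reduction steps are fine: the use of Theorem \ref{TIBR} and the contrapositive of Proposition \ref{PCeder} to obtain a least $k$ and an open $U$ in which every nonempty open subset is $(k-1)$-resolvable while no subspace is $k$-resolvable is correct; the case $k=2$ is indeed finished at that point; and the passage from your ``local core'' (an irresolvable $Z$ that is $\lambda$-dense in some open set) to the conclusion of the lemma is valid. But for $k\ge 3$ you never prove the core: you only describe a transfinite recursion along a $\pi$-base with unspecified ``bookkeeping'' that is supposed to forbid every future partition of $Z$ into two dense halves, and you yourself flag this as the main obstacle. That construction \emph{is} the content of the lemma: forcing $Z$ to meet every open set in $\lambda$ points pushes toward resolvability, and no invariant you name is shown to block all $2^{|Z|}$ potential partitions of the final set; you also note yourself that the recursion is problematic when the $\pi$-weight of $U$ exceeds $\lambda$. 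Moreover, your core is \emph{stronger} than what is needed or known to be attainable: it demands $Z$ be dense in an open subset of the ambient space, and there is no reason the closure of an irresolvable subspace should have nonempty interior. So the proposal is a correct reduction plus an unproven (and possibly unprovable) conjecture, not a proof.

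The paper closes exactly this gap with a minimization argument, no recursion at all, and with a weaker invariant. Fix a nonempty open $Y$ containing no $N$-resolvable subspace; for $A \subseteq Y$ let $f(A)$ be the largest $n < N$ such that $A$ contains an $n$-resolvable subspace, and choose $Z \subseteq Y$ minimizing $f(Z)$ among all subsets with full dispersion character $\Delta(A) = |X|$ (not among sets dense in open sets). If $k = f(Z) > 1$, then minimality plus Proposition \ref{PCeder} makes $Z$ itself $k$-resolvable, so $Z = A \sqcup B$ with both halves dense in $Z$ and neither $k$-resolvable. If $\Delta(A) = |X|$, a nonempty relatively open $C \subseteq A$ with $f(C) < k$ contradicts minimality; if $\Delta(A) < |X|$, pick one open $U$ with $|A \cap U| < |X|$ and pass to $E = B \cap U$, which again has $\Delta(E) = |X|$, is not $k$-resolvable, and yields the same contradiction. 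Hence $f(Z) = 1$: $Z$ is hereditarily irresolvable, and $\Delta(Z) = |X|$ alone already makes $Z$ isodyne of cardinality $|X|$. The idea your proposal lacks is precisely this replacement: use the invariant $\Delta(A) = |X|$, which survives passing to relatively open subsets and deleting traces of size less than $|X|$, and minimize the resolvability degree $f$ over that class instead of trying to build the subspace point by point.
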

\begin{proof}
	There is a nonempty open set $Y \subseteq X$ which contains no $N$-resolvable subsets for some $N < \omega$: otherwise $X$ would be $n$-resolvable for all $n < \omega$ and hence $\omega$-resolvable.
	For every $A \subseteq Y$ denote by $f(A)$ the biggest $n < N$ such that $A$ contains an $n$-resolvable subspace.
	
	Take $Z \subseteq Y$ such that $\Delta(Z) = |X|$ and $f(Z) = \min\{f(A) : A \subseteq Y, \Delta(A) = |X|\}$.
	Let us prove that $f(Z) = 1$.
	
	Denote $k = f(Z)$ and suppose $k > 1$.
	By the minimality of $k$, every nonempty open in $Z$ set contains a $k$-resolvable subspace, so $Z$ is $k$-resolvable.
	In particular, $Z = A \sqcup B$ for some dense in $Z$ sets $A$ and $B$.
	Neither one of $A$ and $B$ is $k$-resolvable, since otherwise $Z$ would be $k + 1$-resolvable.
	Now there are two cases:
	
	I. $\Delta(A) = |X|$. Since $A$ is not $k$-resolvable, there is a nonempty open in $A$ set $C$ such that $f(C) < k$.
	Since $\Delta(C) = |X|$, we obtain a contradiction.
	
	II. $\Delta(A) < |X|$, i.e. there is a nonempty open in $Z$ set $U$ such that $|A \cap U| < |X|$.
	Denote $E = B \cap U$. Clearly, $\Delta(E) = |X|$ and $E$ is not $k$-resolvable, hence there is a nonempty open in $E$ set $C$ such that $f(C) < k$, so we obtain a contradiction again.
\end{proof}

The following technique is essentially same with one introduced by O.~Pavlov in \cite{Pavlov2002}.

\begin{definition}
	Suppose $X$ is an isodyne space.
	We denote by $\mathrm{tr}(X)$ the set of all points $x \in X$ such that for every set $A \subseteq X$ of cardinality less than $|X|$ there is a set $B \subseteq X \setminus A$ of cardinality less than $|X|$ such that $x \in B'$.
\end{definition}

\begin{proposition}\label{PPavlov}
	If $X$ is an isodyne space of regular cardinality and the set $\mathrm{tr}(X)$ is dense in $X$, then $X$ is maximally resolvable.
\end{proposition}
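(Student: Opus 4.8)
Write $\kappa = |X|$. Because $X$ is isodyne, every nonempty open set has cardinality $\kappa$, so $\Delta(X) = \kappa$ and ``maximally resolvable'' means exactly ``$\kappa$-resolvable''. My plan is therefore to construct $\kappa$ pairwise disjoint dense subsets $\{D_\alpha : \alpha < \kappa\}$ by a single transfinite recursion of length $\kappa$. The engine of the recursion is the observation that density can be certified pointwise on $\mathrm{tr}(X)$: since $\mathrm{tr}(X)$ is dense, any set $D$ with $\mathrm{tr}(X) \subseteq \overline{D}$ is automatically dense in $X$, and to force a point $x \in \mathrm{tr}(X)$ into $\overline{D}$ it suffices to place inside $D$ some set $B$ with $x \in B'$, because then every neighborhood of $x$ meets $B \subseteq D$.

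Concretely, I would first fix an enumeration $\{(x_\xi, \alpha_\xi) : \xi < \kappa\}$ of $\mathrm{tr}(X) \times \kappa$; this set has cardinality $\kappa$, since $\mathrm{tr}(X)$ is a nonempty subset of $X$. I then recursively choose sets $B_\xi$, $\xi < \kappa$, each of cardinality $< \kappa$, as follows. At stage $\xi$ let $A_\xi = \bigcup_{\eta < \xi} B_\eta$ be the union of everything used so far. Applying the definition of $\mathrm{tr}(X)$ to the point $x_\xi$ and the forbidden set $A_\xi$, I obtain a set $B_\xi \subseteq X \setminus A_\xi$ with $|B_\xi| < \kappa$ and $x_\xi \in B_\xi'$. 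Finally set $D_\alpha = \bigcup\{B_\xi : \alpha_\xi = \alpha\}$ for each $\alpha < \kappa$.

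It then remains to verify the two required properties. Disjointness is immediate: if $\alpha \ne \beta$, any $B_\xi \subseteq D_\alpha$ and $B_\eta \subseteq D_\beta$ come from distinct stages, and the later one was chosen inside the complement of $A$, which contains the earlier one, so $B_\xi \cap B_\eta = \emptyset$. For density, fix $\alpha < \kappa$ and $x \in \mathrm{tr}(X)$; the pair $(x, \alpha)$ equals $(x_\xi, \alpha_\xi)$ for some $\xi$, whence $B_\xi \subseteq D_\alpha$ and $x = x_\xi \in B_\xi'$, so $x \in \overline{D_\alpha}$. Thus $\mathrm{tr}(X) \subseteq \overline{D_\alpha}$, and $D_\alpha$ is dense in $X$ by the reduction above.

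The one point that genuinely needs the hypotheses --- and the step I expect to be the crux --- is checking that the definition of $\mathrm{tr}(X)$ can actually be invoked at every stage, i.e. that $|A_\xi| < \kappa$ throughout. This is exactly where regularity of $\kappa$ enters: $A_\xi$ is a union of $|\xi| < \kappa$ sets, each of cardinality $< \kappa$, and for a regular cardinal such a union again has cardinality $< \kappa$. Without regularity the forbidden set could swell to size $\kappa$ at some limit stage, and the defining property of $\mathrm{tr}(X)$, which only supplies approximations avoiding sets of size strictly below $\kappa$, would no longer apply. Everything else is routine bookkeeping.
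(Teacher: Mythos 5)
Your proof is correct and follows the same route as the paper: enumerate $\mathrm{tr}(X)\times\kappa$ in order type $\kappa$, recursively pick pairwise disjoint small sets $B_\xi$ with $x_\xi\in B_\xi'$ (using regularity of $\kappa$ to keep the forbidden union small), and collect them into the dense sets $D_\alpha$. You have merely spelled out the bookkeeping (disjointness, density via $\mathrm{tr}(X)\subseteq\overline{D_\alpha}$, and the role of regularity) that the paper's proof leaves implicit.
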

\begin{proof}
	Denote $\kappa = |X|$.
	Choose any enumeration $\mathrm{tr}(X) \times \kappa = \{(x_\xi, \alpha_\xi) : \xi < \kappa\}$.
	Recursively on $\xi < \kappa$, choose pairwise disjoint sets $A_\xi \subseteq X$ of cardinality less than $\kappa$ in such a way that $x_\xi \in A_\xi'$.
	Construct $D_\alpha = \bigsqcup\{A_\xi : \alpha_\xi = \alpha\}$ for all $\alpha < \kappa$.
	Clearly, the sets $D_\alpha$ are dense in $X$ and pairwise disjoint.
\end{proof}

The following observation is the final step in the proofs of both our main theorems.

\begin{proposition}\label{PResFromFunc}
Suppose $X$ is a space, $\varphi : X \to \omega$ and every $x \in X$ is a limit point for the set $\{y \in X : \varphi(y) = \varphi(x) + 1\}$.
Then the space $X$ is $\omega$-resolvable.
\end{proposition}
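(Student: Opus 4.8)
The plan is to work with the level sets $L_n = \varphi^{-1}(n)$ and to note that the hypothesis chains them together under the closure operator. The one substantive lemma I would establish is that the closures form an increasing sequence, $\overline{L_0} \subseteq \overline{L_1} \subseteq \overline{L_2} \subseteq \cdots$. This is the crux, and it is short: the assumption that every $x$ is a limit of $\{y : \varphi(y) = \varphi(x) + 1\}$ says exactly that $L_n \subseteq \overline{L_{n+1}}$ for each $n$, since a point of $L_n$ is a limit point of $L_{n+1}$ and hence lies in its closure. Applying the monotone, idempotent closure operator to this inclusion gives $\overline{L_n} \subseteq \overline{\overline{L_{n+1}}} = \overline{L_{n+1}}$, as desired.

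From the increasing chain I would read off that every point $x$ lies in $\overline{L_n}$ for every $n \geq \varphi(x)$: indeed $x \in L_{\varphi(x)} \subseteq \overline{L_{\varphi(x)}} \subseteq \overline{L_n}$. In words, near every point there are points of arbitrarily large $\varphi$-value. This single consequence is all that the density arguments will need.

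Then I would build the dense sets directly. Fix a partition $\omega = \bigsqcup_{i < \omega} S_i$ into infinitely many infinite pieces and set $E_i = \{y : \varphi(y) \in S_i\}$. The $E_i$ are pairwise disjoint by construction, so it remains to check that each is dense. Given a nonempty open $U$ and a point $x \in U$, I use that $S_i$ is unbounded to pick $n \in S_i$ with $n \geq \varphi(x)$; then $x \in \overline{L_n}$, so the neighborhood $U$ of $x$ meets $L_n \subseteq E_i$. Hence $\{E_i : i < \omega\}$ is a family of $\omega$-many pairwise disjoint dense subsets, witnessing $\omega$-resolvability.

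If one prefers to route through Theorem \ref{TIBR} rather than construct the witnessing family all at once, the same observation yields $n$-resolvability for each fixed $n$ via the residue partition $D_j = \{y : \varphi(y) \equiv j \pmod{n}\}$ for $j < n$; each $D_j$ is dense because every residue class is unbounded in $\omega$, and then Theorem \ref{TIBR} upgrades this to $\omega$-resolvability. The only degenerate point to keep an eye on is that some level sets may be empty, but this causes no difficulty: the density argument always selects an index above $\varphi(x)$, and for a nonempty $X$ such $L_n$ are automatically nonempty. I expect the increasing-chain-of-closures step to be the whole mathematical content; everything after it is bookkeeping.
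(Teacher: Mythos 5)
Your proof is correct and is essentially the paper's own argument: the paper chooses a function $c:\omega\to\omega$ with every fiber $c^{-1}(n)$ infinite and takes $D_n=\{x : c(\varphi(x))=n\}$, which is exactly your partition of $\omega$ into infinite pieces $S_i$ pulled back through $\varphi$. The only difference is that you spell out the density verification (via the increasing chain $\overline{L_n}\subseteq\overline{L_{n+1}}$) that the paper dismisses as ``clearly,'' and your detour through Theorem \ref{TIBR} is unnecessary since the direct construction already yields $\omega$-many disjoint dense sets.
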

\begin{proof}
Choose any function $c : \omega \to \omega$ with the property that the set $c^{-1}(n)$ is infinite for all $n \in \omega$.
Define $D_n = \{x \in X : c(\varphi(x)) = n\}$. Clearly, the sets $D_n$ are pairwise disjoint and every one of them is dense in $X$.
\end{proof}

\section{Resolvability in products of regular spaces of cardinality $\omega_1$}

Throughout this section the reader may assume $\kappa = \omega_1$.

\begin{definition}
	Suppose $X$ is a space and $\kappa \geq \omega$ is an infinite cardinal. We say that:
	\begin{itemize}
		\item a set $A \subseteq X$ is $\kappa$-{\it remote} iff $A \cap \overline{S} = \emptyset$ whenever $S \subseteq X \setminus A$ and $|S| < \kappa$;
		
		\item a disjoint family $\mathcal{H}$ of elements of $X$ is $\kappa$-{\it rare} iff all elements of $\mathcal{H}$ are $\kappa$-remote and $|\bigsqcup\mathcal{H}| < \kappa$;
		
		\item the space $X$ is $\kappa$-{\it disentangled} iff for every $\kappa$-rare family $\mathcal{H}$ there are pairwise disjoint open sets $O(A) \supseteq A$ for $A \in \mathcal{H}$.
	\end{itemize}
\end{definition}

Clearly, every open set is $\kappa$-remote for each cardinal $\kappa$.
Moreover, the family of all $\kappa$-remote subsets of a space $X$ is a topology on $X$.

The following proposition seems classic, but the author was unable to find it in the literature.

\begin{proposition}\label{PSeparatingCountableSets}
	Every regular space is $\omega_1$-disentangled.
\end{proposition}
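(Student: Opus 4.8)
The plan is to specialise the definitions to $\kappa=\omega_1$ and then separate the members of a rare family one point at a time by a recursion over $\omega$, shrinking neighbourhoods with the help of regularity. So I would fix a regular space $X$ and an $\omega_1$-rare family $\mathcal H$, and put $B=\bigsqcup\mathcal H$. By definition $|B|<\omega_1$, hence $B$ is countable; as $\mathcal H$ is disjoint, at most countably many of its members are nonempty, and to each empty member I simply assign the empty open set. Enumerate $B=\{b_i:i<\lambda\}$ with $\lambda\le\omega$, and for $i<\lambda$ let $A(i)\in\mathcal H$ be the unique member with $b_i\in A(i)$. The one place the hypotheses enter is this: for each $i$ the set $B\setminus A(i)$ is countable and disjoint from $A(i)$, so $\omega_1$-remoteness of $A(i)$ gives $A(i)\cap\overline{B\setminus A(i)}=\emptyset$, and in particular $b_i\notin\overline{B\setminus A(i)}$.

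Next I would build open sets $U_i\ni b_i$ by recursion on $i<\lambda$, maintaining the invariant $\overline{U_i}\cap\overline{B\setminus A(i)}=\emptyset$. At step $i$, consider the set $F_i=\overline{B\setminus A(i)}\cup\bigcup\{\overline{U_j}:j<i,\ A(j)\neq A(i)\}$, a finite union of closed sets and hence closed. I claim $b_i\notin F_i$: indeed $b_i\notin\overline{B\setminus A(i)}$ by the previous paragraph, and for each relevant $j<i$ the invariant at stage $j$ gives $\overline{U_j}\cap\overline{B\setminus A(j)}=\emptyset$, while $b_i\in B\setminus A(j)$ because $A(j)\neq A(i)$, so $b_i\notin\overline{U_j}$. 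Since $X$ is regular and $b_i\notin F_i$ with $F_i$ closed, I may choose an open $U_i\ni b_i$ with $\overline{U_i}\cap F_i=\emptyset$; this both preserves the invariant and forces $\overline{U_i}\cap\overline{U_j}=\emptyset$ for every $j<i$ in a different member.

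Finally I would set $O(A)=\bigcup\{U_i:A(i)=A\}$ for $A\in\mathcal H$. Each $O(A)$ is open and contains $A$, and if $A\neq A'$ then any point of $O(A)\cap O(A')$ would lie in some $U_i\cap U_{i'}$ with $A(i)=A\neq A'=A(i')$, hence $i\neq i'$ with $b_i,b_{i'}$ in different members; taking the larger index and invoking its step gives $U_i\cap U_{i'}=\emptyset$, a contradiction. This witnesses that $X$ is $\omega_1$-disentangled. The step I expect to be the real obstacle is organising the recursion so that it actually closes up: separating each $b_i$ merely from the closure of the other members is not enough, since neighbourhoods of points belonging to different members can still meet in the gap between those members. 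The remedy is precisely the invariant $\overline{U_i}\cap\overline{B\setminus A(i)}=\emptyset$, which prevents an already-chosen closure $\overline{U_j}$ from ever recapturing a later point of another member and thereby keeps the finitely many closed sets $F_i$ off $b_i$ at every stage.
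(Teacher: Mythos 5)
Your proof is correct and rests on exactly the same mechanism as the paper's: enumerate the countably many points of $\bigsqcup\mathcal{H}$, use $\omega_1$-remoteness to see that each point lies off the closure of the union of the other members, and use regularity to choose a neighbourhood whose closure avoids both that set and all previously chosen closures. The only difference is organizational: the paper first handles a family of two members and then separates the general countable family by a second recursion over members, whereas your single recursion over points with the invariant $\overline{U_i}\cap\overline{B\setminus A(i)}=\emptyset$ treats all members simultaneously --- a slightly more uniform arrangement of the same argument.
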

\begin{proof}
	Suppose $\mathcal{H}$ is an $\omega_1$-rare family in some regular space $X$.
	
	Let us at first consider the case $|\mathcal{H}| = 2$, i.e. $\mathcal{H} = \{A_0, A_1\}$.
	Choose any indexing $A_0 \sqcup A_1= \{x_n : n < \omega\}$.
	Denote $U^0_0 = U^0_1 = \emptyset$, then suppose that $n < \omega$ and we are given open sets $U^n_0$, $U^n_1$ such that $U^n_0 \cap U^n_1 = \emptyset$ and $\overline{A_i \cup U^n_i} \cap A_{1-i} = \emptyset$ for $i < 2$.
	Take $j < 2$ such that $x_n \in A_j$. Since $X$ is regular, there is an open set $V_n$ such that $x_n \in V_n$ and $\overline{V_n} \cap \overline{A_{1-j} \cup U^n_{1-j}} = \emptyset$. Define $U^{n+1}_j = U^n_j \cup V_n$ and $U^{n+1}_{1-j} = U^n_{1-j}$.
	Clearly, the sets $O(A_i) = \bigcup_{n < \omega} U^n_i$ are as required.
	
	Now, if $\mathcal{H} = \{A_n : n < |\mathcal{H}|\}$, recursively on $n < |\mathcal{H}|$ choose open sets $O(A_n)$ and $W_n$ in such a way that $O(A_n) \cap W_n = \emptyset$, $A_n \subseteq O(A_n)$, $\bigsqcup_{k > n}A_k \subseteq W_n$ and $O(A_{n+1}) \cup W_{n+1} \subseteq W_n$.
	Clearly, the sets $O(A_n)$ are as required.
\end{proof}

\begin{definition}\label{DefCarving}
	Suppose $X$ is a space and $\kappa$ is a cardinal.
	We say that a disjoint family $\mathcal{A}$ of nonempty $\kappa$-remote subsets of $X$ is a $\kappa$-{\it carving} on the space $X$ iff
	\begin{enumerate}
		\item[(C1)] $|\bigsqcup \mathcal{H}| < \kappa$ whenever $\mathcal{H} \subseteq \mathcal{A}$ and $|\mathcal{H}| < \kappa$;
		
		\item[(C2)] for every nonempty open set $U \subseteq X$ the family $\{A \in \mathcal{A} : A \cap U \ne \emptyset\}$ has cardinality $\kappa$.
		In particular, $|\mathcal{A}| = \kappa$.
	\end{enumerate}
	
	We say that a space $X$ is $\kappa$-{\it carvable} iff there is a $\kappa$-carving on $X$.
\end{definition}

Clearly, if $\mathcal{A}$ is a $\kappa$-carving, $\mathcal{H} \subseteq \mathcal{A}$ and $|\mathcal{H}| < \kappa$, then $\mathcal{H}$ is $\kappa$-rare.

\begin{proposition}\label{PFork}
	If an isodyne space $X$ of regular uncountable cardinality is not $\omega$-resolvable, then $X$ contains a crowded $|X|$-carvable subspace.
\end{proposition}
\begin{proof}
Denote $\kappa = |X|$.
By Lemma \ref{Pomega_to_irres} the space $X$ contains a hereditary irresolvable subspace $Y$ such that $|Y| = \Delta(Y) = \kappa$.

The set $\mathrm{tr}(Y)$ cannot be dense in $Y$, because otherwise $Y$ would be maximally resolvable by Proposition \ref{PPavlov}.
Thus, there is a nonempty open in $Y$ set $Z \subseteq Y$ such that $Z \cap \mathrm{tr}(Y) = \emptyset$.
Let us show that $Z$ is $\kappa$-carvable.

Denote by $\mathcal{A}$ an arbitrary maximal disjoint family of $\kappa$-remote subsets of $Z$ of cardinality less than $\kappa$.
Clearly, $\mathcal{A}$ satisfies the condition (C1) of Definition \ref{DefCarving} of a $\kappa$-carving.
Let us prove that $\mathcal{A}$ satisfies the condition (C2) as well.
Suppose, on the contrary, that there is a nonempty open in $Z$ set $U$ such that the set $M = U \cap \bigsqcup \mathcal{A}$ has cardinality less than $\kappa$.
The set $U \setminus M$ is dense in $U$ and $U$ is irresolvable, so there is a nonempty open in $Z$ set $V \subseteq U$ such that $V \cap M = \emptyset$.

Since $V \cap \mathrm{tr}(Y) = \emptyset$, for every point $x \in V$ there is a set $R(x) \subseteq V$ such that $|R(x)| < \kappa$ and $x \notin S'$ whenever $S \subseteq V \setminus R(x)$ and $|S| < \kappa$.
Denote by $B_0$ any one-point subset of $V$, $B_{n + 1} = B_n \cup \bigcup_{x \in B_n}R(x)$ and $B = \bigcup_{n < \omega}B_n$.
Clearly, $B$ is $\kappa$-remote, $|B| < \kappa$ and $B$ intersects no elements of $\mathcal{A}$.
It is a contradiction with maximality of $\mathcal{A}$.
\end{proof}

\begin{proposition}\label{PSumOfCarvable}
	If $Z = X \sqcup Y$, where $X$ and $Y$ are open $\kappa$-carvable subspaces of a space $Z$, then the space $Z$ is $\kappa$-carvable as well.
\end{proposition}
\begin{proof}
	Clearly, if $\mathcal{A}$ is a $\kappa$-carving on $X$ and $\mathcal{B}$ is a $\kappa$-carving on $Y$, then $\mathcal{A} \sqcup \mathcal{B}$ is a $\kappa$-carving on $Z$.
\end{proof}

\begin{lemma}\label{LCarving}
	If a space $X$ is $\kappa$-carvable and $X^2$ is $\kappa$-disentangled, then $X^2$ is $\omega$-resolvable.
\end{lemma}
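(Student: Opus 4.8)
The plan is to reduce the whole statement to Proposition~\ref{PResFromFunc} applied to a convenient dense subspace, and to build the required integer leveling on the \emph{cells} $A \times B$ with $A \in \mathcal{A}$, $B \in \mathcal{B}$, where $\mathcal{A}, \mathcal{B}$ are fixed $\kappa$-carvings on $X$ and $Y$. First I would record the geometry of the cells. A two-projection argument shows that each cell $A \times B$ is $\kappa$-remote in $X \times Y$: given $S \subseteq (X \times Y) \setminus (A \times B)$ with $|S| < \kappa$, split $S$ according to whether its first coordinate lies in $A$, project the two halves to the coordinate in which they avoid $A$ (respectively $B$), and use $\kappa$-remoteness of $A$ in $X$ and of $B$ in $Y$ to find, around any $(a,b) \in A \times B$, a box disjoint from $S$. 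Moreover any two distinct cells are disjoint, since $\mathcal{A}$ and $\mathcal{B}$ are disjoint families; and any subfamily of fewer than $\kappa$ cells is $\kappa$-rare, because the index sets it uses on each side have size $< \kappa$, so by (C1) the two coordinate-unions have size $< \kappa$ and hence so does the total union. Finally, (C2) makes both $\bigcup \mathcal{A}$ and $\bigcup \mathcal{B}$ dense, so the union $D$ of all cells is dense in $X \times Y$, and it suffices to $\omega$-resolve $D$.

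Next I would construct $\varphi \colon D \to \omega$, constant on each cell, so that every point of $D$ is a limit point of the set of points one level higher; Proposition~\ref{PResFromFunc} then finishes the job. Since $\varphi$ is constant on cells and any cell meeting a basic box $U \times V$ has a genuine point of $D$ inside $U \times V$, the limit condition becomes the combinatorial demand that, for each level $m$, the level-$m$ cells meet every box. I would secure this by a recursion of length $\kappa$ that assigns levels to cells while keeping the set of already-committed cells of size $< \kappa$, hence $\kappa$-rare, and applying $\kappa$-disentangledness of $X \times Y$ to surround the committed cells by pairwise disjoint open ``sleeves'' $O(C) \supseteq C$. Each sleeve is open, so it contains a box and therefore meets $\kappa$-many cells by (C2), of which only $< \kappa$ have ever been used; this supplies, inside $O(C)$ and near every point of a level-$m$ cell $C$, an unlimited stock of fresh cells to commit at level $m+1$. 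Disjointness of the sleeves is what lets the successor-assignments for different committed cells run in parallel without forcing any cell into two incompatible levels.

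The main obstacle is precisely the step that forces both hypotheses into play. In the intended application each factor sits inside a hereditarily irresolvable space and is itself irresolvable, so no leveling depending on a single coordinate can make every level dense: the resolution must be genuinely two-dimensional, exploiting that a box meets a $\kappa$-sized \emph{rectangle} of cells rather than a $\kappa$-sized set on one side. At the same time the number of distinct boxes, equivalently of coordinate traces $\{A \in \mathcal{A} : A \cap U \neq \emptyset\}$, may exceed $\kappa$, so one cannot naively enumerate all density requirements. The function of $\kappa$-disentangledness is to tame this: converting the $\kappa$-remoteness of the committed cells into honest disjoint open neighborhoods keeps the interaction between past and future commitments below $\kappa$ and certifies that a cell placed at level $m+1$ inside $O(C)$ is a genuine successor-limit witness for the level-$m$ points of $C$ near it. The delicate points I expect to spend real effort on are verifying that the committed family stays $\kappa$-rare at every stage (so disentangling keeps applying) and that the $\kappa$-many boxes served during the recursion are locally cofinal enough to guarantee density against \emph{all} boxes; everything else is the bookkeeping of a length-$\kappa$ construction.
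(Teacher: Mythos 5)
Your first paragraph is correct and matches the paper's setup: the cells $A\times B$ are $\kappa$-remote in $X\times Y$ (the two-projection argument is exactly right), any subfamily of fewer than $\kappa$ cells is $\kappa$-rare by (C1), and the union of all cells is dense by (C2). The gap is in the construction of $\varphi$, and it is fatal to the plan rather than a matter of bookkeeping: you require $\varphi$ to be constant on cells and claim that fresh cells committed inside a sleeve $O(C)$ serve as successor-limit witnesses for points of a level-$m$ cell $C$. The remoteness you yourself established forbids this. Each cell has cardinality less than $\kappa$ (apply (C1) to a singleton), so the union of any fewer than $\kappa$ cells other than $C$ is a set of size less than $\kappa$ disjoint from $C$, and by $\kappa$-remoteness of $C$ its closure misses $C$ entirely. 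Hence no cell, and no batch of cells committed at any single stage of your recursion, is ever a limit witness for even one point of $C$; only the cumulative union of $\kappa$-many level-$(m+1)$ cells could accumulate at $C$, and for that you must arrange that \emph{every} box around \emph{every} point of $C$ meets a level-$(m+1)$ cell. As you concede, the distinct box traces may outnumber $\kappa$, so they cannot be enumerated by the recursion; and $\kappa$-disentangledness does nothing to ``tame'' this, since it only separates a $\kappa$-rare family by disjoint open sets --- it does not shrink the set of density requirements. Note also that the one density mechanism (C2) hands you for free is within a single row or column: every point of $L^\delta=\bigsqcup_{\alpha<\delta}A_\alpha\times B^\delta$ is a limit of $R^\delta=\bigsqcup_{\alpha>\delta}A_\alpha\times B^\delta$, because a witness can share the second coordinate with the point. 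But if whole cells are your atoms, one cell of $R^\delta$ is then demanded as a witness for cells of $L^\delta$ sitting at different levels, which forces incompatible values; the disjointness of the sleeves does not help, because a single cell can meet many disjoint sleeves.

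This is precisely the obstruction the paper's proof is engineered around, and the fix is to abandon your constancy requirement: in the paper, $\varphi$ is constant not on cells but on \emph{fragments} of cells. At stage $\delta$ the row part $L^\delta$ is cut by the (pairwise disjoint, open) sleeves produced at earlier column stages into pieces $U\cap L^\delta$, plus one leftover piece; each such fragment gets its own sleeve $O^\delta(\cdot)$ by disentangledness, and its successor is the trace $O^\delta(\cdot)\cap R^\delta$ of that sleeve on the whole row tail --- a set spread over $\kappa$-many cells, dense at every point of the fragment automatically (shared second coordinate plus (C2)), and disjoint from the successors of all other fragments. That disjointness is the real job of disentangledness: it makes the predecessor of each fragment unique, so the well-founded relation $M\mapsto N$ yields a well-defined level function and Proposition \ref{PResFromFunc} applies. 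So your proposal is not the paper's argument with details omitted; the step at which it breaks is exactly the step the paper's finer decomposition exists to circumvent.
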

\begin{proof}
	Suppose $\mathcal{A}$ is $\kappa$-carving on the space $X$.
	Choose any enumeration $\mathcal{A} = \{A_\alpha : \alpha < \kappa\}$.
	For all $\alpha,\beta < \kappa$ denote $C_\alpha^\beta = A_\alpha \times A_\beta$.
	For all $\delta < \kappa$ denote
	$$L_\delta = \bigsqcup\limits_{\alpha < \delta} C_\alpha^\delta, \;\;
	R_\delta = \bigsqcup\limits_{\alpha > \delta} C_\alpha^\delta.$$
	
	For every set $M \subseteq X^2$ we denote $M^{-1} = \{(y,x) : (x,y) \in M\}$ and for every family $\mathcal{M}$ of subsets of $X^2$ we denote $\mathcal{M}^{-1} = \{M^{-1} : M \in \mathcal{M}\}$.
	
	\begin{picture}(360, 150)
		\put(105, 10){\vector(1,0){150}}
		\put(240, 0){$X$}
		
		\put(115, 0){\vector(0,1){150}}
		\put(105, 135){$Y$}
		
		\put(170, 0){\line(0, 1){150}}
		\put(190, 0){\line(0, 1){150}}
		\put(175, 0){$A_\delta$}
		
		\put(105, 65){\line(1, 0){150}}
		\put(105, 85){\line(1, 0){150}}
		\put(100, 70){$A_\delta$}
		
		\put(175, 70){$C_\delta^\delta$}
		\put(135, 70){$L_\delta$}
		\put(210, 70){$R_\delta$}
		\put(170, 30){$L_\delta^{-1}$}
		\put(170, 110){$R_\delta^{-1}$}
	\end{picture}
	\begin{center}
		Figure 1. A rough visualization of the defined sets
	\end{center}
	
	We claim that $\overline{R_\delta} = X \times \overline{A_\delta}$.
	Indeed, suppose an open set $P \times Q$ has a nonempty intersection with $X \times \overline{A_\delta}$, i.e. $Q \cap A_\delta \ne \emptyset$.
	Since $\mathcal{A}$ is a $\kappa$-carving, there is an ordinal $\alpha > \delta$ such that $A_\alpha \cap P \ne \emptyset$.
	Thus, $P \times Q$ has a nonempty intersection with $R_\delta$, which verifies our claim.
	
	In particular, $L_\delta \subseteq (R_\delta)'$.
	
	For all $\delta < \kappa$ we will construct disjoint families $\mathcal{L}_\delta$ and $\mathcal{R}_\delta$ of $\kappa$-remote subsets of $X^2$ and an injection $H_\delta : \mathcal{L}_\delta \to \mathcal{R}_\delta$ in such a way that the following four conditions hold:
	
	\begin{multicols}{2}
		\begin{itemize}
			\item[(L)] $\bigsqcup \mathcal{L}_\delta \subseteq L_\delta \subseteq \overline{\bigsqcup \mathcal{L}_\delta}$;
			
			\item[(R)] $\bigsqcup \mathcal{R}_\delta \subseteq R_\delta$;
		\end{itemize}
	\end{multicols}
	
	\begin{itemize}
		\item[(H)] $M \subseteq H_\delta(M)'$ for every $M \in \mathcal{L}_\delta$;
		
		\item[(UL)] for each  $\alpha < \delta$ and every $M \in \mathcal{R}_\alpha^{-1}$ we have $M \cap L_\delta \in \mathcal{L}_\delta$.
	\end{itemize}
	
	Suppose $\delta < \kappa$ and the families $\mathcal{L}_\gamma$, $\mathcal{R}_\gamma$ and injections $H_\gamma$ are constructed for all $\gamma < \delta$.
	
	Define $\mathcal{L}^*_\delta = \{M \cap L_\delta : M \in \mathcal{R}_\alpha^{-1}, \alpha < \delta\}$, $L_\delta^* = L_\delta \setminus \overline{\bigsqcup \mathcal{L}^*_\delta}$ and $\mathcal{L}_\delta = \mathcal{L}^*_\delta \sqcup \{L_\delta^*\}$.
	The family $\mathcal{L}_\delta$ is $\kappa$-rare, hence there are pairwise disjoint open sets $O_\delta(L) \supseteq L$ for $L \in \mathcal{L}_\delta$.
	Denote $H_\delta(L) = O_\delta(L) \cap R_\delta$ and $\mathcal{R}_\delta = \{H_\delta(L) : L \in \mathcal{L}_\delta\}$.
	It is easy to see that $\mathcal{L}_\delta$, $\mathcal{R}_\delta$ and $H_\delta$ are as required and the conditions (L,R,H,UL) are satisfied.
	
	\medskip
	
	Now let us unite all the families $\mathcal{L}_\delta$ and $\mathcal{L}_\delta^{-1}$ for $\delta < \kappa$ into one family $\mathcal{E}$
	and define the function $F$ on $\mathcal{E}$ in the following way:
	$$F(M) = \begin{cases}
		H^\delta(M), \; M \in \mathcal{L}_\delta; \\
		H^\delta(M)^{-1}, \; M \in \mathcal{L}_\delta^{-1}.
	\end{cases}$$
	
	It follows from the property (L) that $\mathcal{E}$ is disjoint and the set $E = \bigsqcup \mathcal{E}$ is dense in the set $\bigsqcup_{\alpha \ne \beta}C_\alpha^\beta$, which, in turn, is dense in $X^2$.
	Moreover, $F(M) \cap F(N) = \emptyset$ whenever $M,N \in \mathcal{E}$ and $M \ne N$.
	For all $M,N \in \mathcal{E}$ let us write $M \mapsto N$ iff $N \subseteq F(M)$.
	Thus, for every $N \in \mathcal{E}$ there is at most one $M \in \mathcal{E}$ such that $M \mapsto N$.
	
	We claim that the relation $\mapsto$ is well-founded.
	For every $\alpha,\beta < \kappa$ and each point $p \in C_\alpha^\beta$ denote $||p|| = \max\{\alpha, \beta\}$.
	Suppose $M \mapsto N$ and take arbitrary points $p \in M$ and $q \in N$.
	It is easy to see that $||p|| < ||q||$, which verifies the claim.
	
	It follows that we can define the function $\varphi : E \to \omega$ as follows (for every $q \in N \in \mathcal{E}$):
	$$\varphi(q) = \begin{cases}
		0, \; \text{there is no $M$ such that $M \mapsto N$}; \\
		\varphi(p) + 1, \; p \in M \mapsto N.
	\end{cases}$$
	
	Thus, by Proposition \ref{PResFromFunc} the subspace $E$ (and hence $X^2$) is $\omega$-resolvable.
\end{proof}

\begin{theorem}\label{TReg}
	The product of any two regular isodyne spaces of cardinality $\omega_1$ is $\omega$-resolvable.
\end{theorem}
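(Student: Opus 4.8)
The plan is to reduce to products of small distinguished subspaces via Ceder's corollary and then feed the dichotomy of Proposition~\ref{PFork} into a short case analysis. Fix regular isodyne spaces $X, Y$ with $|X| = |Y| = \omega_1$ and set $\kappa = \omega_1$, which is regular and uncountable. By Corollary~\ref{CCeder} it suffices to produce, for every pair of nonempty open sets $U \subseteq X$ and $V \subseteq Y$, an $\omega$-resolvable subspace of $U \times V$. Since open subspaces inherit both regularity and the isodyne property (every nonempty open subset of $U$ is open in $X$ and so has cardinality $|X| = \omega_1 = |U|$), the spaces $U$ and $V$ are again regular isodyne spaces of cardinality $\omega_1$, so Proposition~\ref{PFork} applies to each of them.

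Applying Proposition~\ref{PFork} to $U$ and to $V$, I obtain crowded subspaces $X' \subseteq U$ and $Y' \subseteq V$, each of which is either $\omega_1$-carvable or $\omega$-resolvable. Since $X' \times Y'$ is a subspace of $U \times V$, it is enough to show that $X' \times Y'$ contains an $\omega$-resolvable subspace, and I split according to the dichotomy. If at least one factor, say $X'$, is $\omega$-resolvable, the product is immediately $\omega$-resolvable: given pairwise disjoint dense sets $D_n \subseteq X'$ for $n < \omega$, the sets $D_n \times Y'$ are pairwise disjoint, and each is dense in $X' \times Y'$ because $D_n$ meets every nonempty open subset of $X'$ while $Y'$ is nonempty.

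The remaining case is that both $X'$ and $Y'$ are $\omega_1$-carvable, where I invoke Lemma~\ref{LCarving} with $\kappa = \omega_1$. For this I must verify that $X' \times Y'$ is $\omega_1$-disentangled, and this is where regularity does the real work: $X'$ and $Y'$ are subspaces of the regular spaces $X$ and $Y$, hence regular, and a product of regular spaces is regular, so $X' \times Y'$ is regular; Proposition~\ref{PSeparatingCountableSets} then yields that it is $\omega_1$-disentangled. Lemma~\ref{LCarving} now gives that $X' \times Y'$ is $\omega$-resolvable, which completes the case analysis and, via Corollary~\ref{CCeder}, the proof.

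I expect no single step to be a genuine obstacle, since the heavy lifting is already packaged in Propositions~\ref{PFork} and~\ref{PSeparatingCountableSets} and in Lemma~\ref{LCarving}; the one point that essentially uses the regularity hypothesis rather than mere Hausdorffness is the passage from regularity of the factors to $\omega_1$-disentangledness of their product. It is precisely this passage that pins the argument to the two-factor, cardinality-$\omega_1$ regime, in contrast with the Hausdorff multi-factor situation handled separately.
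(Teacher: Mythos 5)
Your proof is correct and follows essentially the same route as the paper's: reduce via Corollary~\ref{CCeder}, apply the dichotomy of Proposition~\ref{PFork} to each factor, handle the $\omega$-resolvable case directly, and invoke Lemma~\ref{LCarving} in the carvable case. In fact you spell out two steps the paper leaves implicit --- that open subspaces inherit the regular isodyne hypotheses (justifying the Ceder reduction) and that regularity of $X' \times Y'$ together with Proposition~\ref{PSeparatingCountableSets} supplies the $\omega_1$-disentangled hypothesis of Lemma~\ref{LCarving}.
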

\begin{proof}
	Suppose $X$ and $Y$ are regular isodyne spaces of cardinality $\omega_1$.
	We may assume that $X \cap Y = \emptyset$.
	By Corollary \ref{CCeder} it is sufficient to prove that the product $X \times Y$ contains an $\omega$-resolvable subspace.
	Clearly, if either one of $X$ and $Y$ is $\omega$-resolvable, then $X \times Y$ is $\omega$-resolvable as well.
	Else by Proposition \ref{PFork} $X$ contains some $\omega_1$-carvable subspace $X_1$ and $Y$ contains some $\omega_1$-carvable subspace $Y_1$.
	Consider the space $Z = X_1 \sqcup Y_1$, in which $X_1$ and $Y_1$ are open subsets.
	By Proposition \ref{PSumOfCarvable} the space $Z$ is $\omega_1$-carvable, so by Lemma \ref{LCarving} the space $Z^2$ is $\omega$-resolvable.
	Since $X_1 \times Y_1$ is an open subspace of $Z^2$, the space $X_1 \times Y_1$ is $\omega$-resolvable as well.
\end{proof}

Our proof does not work for spaces of cardinality $\omega_2$, because not all such spaces are $\omega_2$-disentangled.

\begin{question}
Is it true in ZFC that the product of two regular isodyne spaces of cardinality $\omega_2$ is $3$-resolvable?
\end{question}

If $2^{\omega_1} = \omega_2$, then such a product is maximally resolvable without any separation axioms at all \cite{Lipin2025}.

\begin{question}
Is it true in ZFC that the product $X \times Y$ is resolvable whenever $X$ and $Y$ are regular isodyne spaces of cardinalities $\omega$ and $\omega_1$ respectively?
\end{question}

The existence of two crowded spaces with irresolvable product is equiconsistent with the existence of a measurable cardinal \cite{BL1996,JSS2023}, but the smallest possible cardinality of such an example remains unknown. If CH holds (or at least $\mathrm{cf}(\frak{c}) = \omega_1$), then the product from the last question is maximally resolvable without any separation axioms \cite{Lipin2025}.

\section{Resolvability in products of three and more Hausdorff spaces}

\begin{definition}
	Suppose $I$ is a nonempty finite set and $\kappa$ is a cardinal.
	Let us denote by $\kappa_{1-1}^I$ the family of all injections from $I$ into $\kappa$.
	For every $p \in \kappa_{1-1}^I$ we define:
	\begin{itemize}
		\item $||p|| = \max\{p(i) : i \in I\}$;
		
		\item $m(p)$~--- the only $j \in I$ such that $p(j) = ||p||$.
	\end{itemize}
\end{definition}

A few times below we need to consider the cardinality of the ordinal $||p||$.
To avoid the confusing notation $|||p|||$, we denote $\#A = |A|$.

\begin{definition}
	Suppose $I$ is a nonempty finite set and $\kappa > 0$ is a cardinal.
	We say that a set $\prod_{i \in I} A_i \subseteq \kappa^I$ is a {\it linear fiber} in $\kappa^I$ with the {\it direction} $j \in I$ iff $A_j = \kappa$ and for all $i \in I \setminus \{j\}$ we have $|A_i| = 1$.
	We denote by $\mathbb{L}(\kappa^I)$ the family of all linear fibers in $\kappa^I$.
\end{definition}

Recall that for every cardinal $\kappa = \aleph_\alpha$ and each ordinal $\beta$ the notation $\kappa^{+\beta}$ means $\aleph_{\alpha + \beta}$.

\begin{lemma}\label{LBristling}
	Suppose $n < \omega$.
	For every infinite cardinal $\kappa$ and each set $I$ of cardinality $n + 2$ there is a function $F : \kappa_{1-1}^I \to \mathbb{L}(\kappa^I)$ such that for all $p \in \kappa_{1-1}^I$ and $L \in \mathbb{L}(\kappa^I)$:
	\begin{enumerate}
		\item $p \in F(p)$;
		
		\item $m(p)$ is not the direction of $F(p)$;
		
		\item $|F^{-1}(L)|^{+n} < \kappa$;
	\end{enumerate}
\end{lemma}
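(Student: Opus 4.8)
The plan is to prove the lemma by induction on $n$ (equivalently, on $|I| = n + 2$); the statement is a cardinal-graded form of a Kuratowski-type decomposition theorem, and conditions (1)--(2) reveal what a choice of $F$ really amounts to. Since by (1) the fiber $F(p)$ must pass through $p$, specifying $F(p)$ is the same as specifying its direction: it suffices to choose for each $p \in \kappa_{1-1}^I$ a \emph{direction} $d(p) \in I \setminus \{m(p)\}$ and to let $F(p)$ be the unique linear fiber of direction $d(p)$ through $p$; then (1) is automatic and (2) holds because $d(p) \ne m(p)$. Under this reformulation, for a linear fiber $L$ of direction $j$ with fixed coordinates $(a_i)_{i \in I \setminus \{j\}}$ the set $F^{-1}(L)$ consists exactly of those $p \in L$ (that is, $p(i) = a_i$ for $i \ne j$, with $p(j)$ free) for which $d(p) = j$, and the whole problem is to bound the number of such $p$.

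For the base case $n = 0$ we have $I = \{i_0, i_1\}$, and we let $d(p)$ be the coordinate on which $p$ attains its minimum (so $d(p) \ne m(p)$). A fiber $L$ of direction $j$ then has its other coordinate fixed at some value $a$, and its preimages are the $p$ whose $j$-th coordinate ranges over the ordinals below $a$; hence $|F^{-1}(L)| = |a| < \kappa$, which is condition (3) for $n = 0$ since $|F^{-1}(L)|^{+0} = |F^{-1}(L)|$. For the inductive step, assume the statement for $n$ and let $|I| = n + 3$. For each infinite $v < \kappa$ fix a bijection $e_v : v \to |v|$, and for each $m \in I$ let $G_{m,v} : |v|_{1-1}^{I \setminus \{m\}} \to \mathbb{L}(|v|^{I \setminus \{m\}})$ be a function provided by the induction hypothesis for the cardinal $|v|$ and the index set $I \setminus \{m\}$ (which has size $n + 2$). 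Given $p$, put $m = m(p)$ and $v = \|p\|$; the restriction $p \restriction (I \setminus \{m\})$ is an injection into $v$, so $q := e_v \circ (p \restriction (I \setminus \{m\}))$ lies in $|v|_{1-1}^{I \setminus \{m\}}$, and we set $d(p)$ to be the direction of $G_{m,v}(q)$, which belongs to $I \setminus \{m\}$ and so differs from $m(p)$. (If $v$ is finite there are only finitely many such $p$, and we assign $d(p)$ arbitrarily subject to $d(p) \ne m(p)$.)

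To verify (3), fix a linear fiber $L$ of direction $j$ with base $(a_i)_{i \ne j}$ and take $p \in F^{-1}(L)$; since $d(p) = j \ne m(p)$, the coordinate $p(j)$ is not the largest, so $m(p)$ is the index of the largest of the \emph{fixed} values $a_i$ $(i \ne j)$ and $\|p\| = a_{m(p)}$ — in particular a single pair $m, v$ serves for all of $F^{-1}(L)$. The assignment $p \mapsto q$ is then injective and carries $F^{-1}(L)$ into $G_{m,v}^{-1}(L')$, where $L'$ is the direction-$j$ fiber in $|v|^{I \setminus \{m\}}$ with base $(e_v(a_i))_{i \in I \setminus \{m, j\}}$: indeed $q \in L'$, and if $d(p) = j$ then $G_{m,v}(q)$ is a direction-$j$ fiber containing $q$ (by condition (1) for $G_{m,v}$), hence equals $L'$. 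Therefore $|F^{-1}(L)| \le |G_{m,v}^{-1}(L')|$, and the induction hypothesis gives $|G_{m,v}^{-1}(L')|^{+n} < |v|$, whence $|F^{-1}(L)|^{+n} \le |G_{m,v}^{-1}(L')|^{+n} < |v|$ and so $|F^{-1}(L)|^{+(n+1)} \le |v| < \kappa$, as required. The main obstacle is exactly this final reduction: one must observe that the maximal coordinate is constant along the target fiber, so that counting preimages collapses to a single induction-hypothesis fiber $L'$, and then carry the successor-cardinal bookkeeping (monotonicity of $\mu \mapsto \mu^{+n}$ together with $|v| < \kappa$) cleanly through the step; the finite-$v$ case and the choice of the auxiliary families $\{e_v\}$ and $\{G_{m,v}\}$ are routine by comparison.
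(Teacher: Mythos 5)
Your proof is correct and is essentially the paper's argument in inductive packaging: unrolling your recursion, $d(p)$ is exactly the paper's $h(p)$, obtained by iteratively deleting the maximal coordinate and collapsing the remaining values through a bijection $\|p\| \leftrightarrow |\,\|p\|\,|$, and your key counting observation (all preimages of a fiber $L$ share the same $m(p)$ and norm $v$, so they inject into a single lower-dimensional fiber, losing one cardinal successor per step) is the same as the paper's injection of $F^{-1}(L)$ into $|\,\|r^{(n)}\|\,|$. The only cosmetic difference is that the paper runs the collapse for finite norms too (so no separate finite case is needed), whereas you handle finite $v$ by an arbitrary admissible assignment, which your uniform-norm observation correctly shows is harmless.
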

\begin{proof}
For every ordinal $\alpha < \kappa$ fix an arbitrary bijection $f_\alpha : \alpha \leftrightarrow |\alpha|$.

For every nonempty set $J \subseteq I$ and each injection $p : J \to \kappa$ we denote by $p'$ the injection $q : J \setminus \{m(p)\} \to \kappa$ defined as $q(i) = f_{||p||}(p(i))$.
Denote $p^{(0)} = p$ and $p^{(k+1)} = (p^{(k)})'$ for $k \leq |J|$.
Denote by $h(p)$ the only element of the domain of the function $p^{(|J| - 1)}$.

For every $p \in \kappa_{1-1}^I$ we denote by $F(p)$ the only linear fiber $L$ in $\kappa^I$ such that $p \in L$ and the direction of $L$ is $h(p)$.
Clearly, the conditions (1,2) are satisfied.
Let us take any linear fiber $L \in \mathbb{L}(\kappa^I)$ and show that $|F^{-1}(L)|^{+n} < \kappa$.

Denote by $j$ the direction of $L$.
Denote $E = I \setminus \{j\}$.
Take any point $p_0 \in L$ and denote by $r$ the restriction $p_0|_E$ of the function $p_0$ to the set $E$.
Clearly, this does not depend on $p_0$.
Moreover, for every $p \in F^{-1}(L)$ we have $m(p^{(k)}) = m(r^{(k)})$ for $k \leq n$ and $p^{(k)}|_E = r^{(k)}$ for $k \leq n + 1$.

For all $p \ne q$ in $F^{-1}(L)$ we have $p(j) \ne q(j)$ and hence $p^{(k)}(j) \ne q^{(k)}(j)$ for all $k \leq n + 1$.
For every $p \in F^{-1}(L)$ denote $\varphi(p) = p^{(n + 1)}(j)$.
Thus, the function $\varphi$ is an injection from $F^{-1}(L)$ to $\#||r^{(n)}||$ and hence $|F^{-1}(L)| \leq \#||r^{(n)}||$.
It follows from the definition of $'$ that there are at least $n$ cardinals between $||r^{(n)}||$ and $||r||$, so 
$\#||r^{(n)}||^{+n} \leq \#||r|| < \kappa$
and we obtain (3).
\end{proof}

\begin{proposition}\label{PBristling4}
	The function $F$ from Lemma \ref{LBristling} also satisfies the following condition for all $p,q \in \kappa_{1-1}^I$:
	\begin{enumerate}
		\item[{\rm(4)}] if $q \in F(p)$ and $||q|| > ||p||$, then the direction of $F(p)$ is $m(q)$.
	\end{enumerate}
\end{proposition}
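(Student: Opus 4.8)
The plan is to read condition (4) straight off the structure of $F(p)$ as a linear fiber, without revisiting the recursive definition of $h$. Write $d = h(p)$ for the direction of $F(p)$. By item (1) of Lemma \ref{LBristling} we have $p \in F(p)$, and since $F(p)$ is a linear fiber with direction $d$, every coordinate except the $d$-th is a singleton; because $p$ lies in the fiber, those singletons are exactly the values $p(i)$. Hence
$$F(p) = \{x \in \kappa^I : x(i) = p(i) \text{ for all } i \in I \setminus \{d\}\}.$$

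Next I would use the hypothesis $q \in F(p)$ to conclude $q(i) = p(i)$ for every $i \in I \setminus \{d\}$. In particular $q(i) = p(i) \le ||p||$ for all such $i$, by the definition of $||p||$. Combining this with the assumption $||q|| > ||p||$, the maximum value $||q||$ cannot be attained at any coordinate $i \ne d$, since there $q(i) \le ||p|| < ||q||$. Therefore the unique maximizing coordinate of $q$ is $d$ itself, i.e. $m(q) = d$, which is precisely the direction of $F(p)$, as required.

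I do not expect a genuine obstacle here: the entire content is unwinding the definition of a linear fiber and observing that raising the value of $p$ in its single free coordinate $d$ above $||p||$ forces the new maximum to sit in that coordinate. The only point demanding a moment's care is the identification of the constant coordinates of $F(p)$ with the values of $p$, which rests on item (1) (namely $p \in F(p)$) together with the defining property that a linear fiber is a singleton in every non-direction coordinate.
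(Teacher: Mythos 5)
Your proof is correct and follows essentially the same route as the paper's: both identify the non-direction coordinates of the fiber $F(p)$ with the values of $p$ (using $p \in F(p)$), observe that $q$ agrees with $p$ there, and conclude from $||q|| > ||p||$ that the maximum of $q$ must sit in the direction coordinate, so $m(q)$ equals the direction of $F(p)$. The paper phrases this slightly more tersely (deducing $q(j) > ||p||$ directly), but the argument is the same.
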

\begin{proof}
	Denote by $j$ the direction of the linear fiber $F(p)$.
	For all $i \in I \setminus \{j\}$ we have $p(i) = q(i)$.
	Since $||q|| > ||p||$, we have $q(j) > ||p||$, so $m(q) = j$.
\end{proof}

\begin{theorem}\label{TCube}
	The product of any $n+2$ Hausdorff isodyne spaces of cardinality $\omega_n$ is $\omega$-resolvable for all $n < \omega$.
\end{theorem}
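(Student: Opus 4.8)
The plan is to mirror the architecture of the regular case (Theorem~\ref{TReg}), replacing its single appeal to disentanglement by the purely combinatorial routing supplied by Lemma~\ref{LBristling}. Write $\kappa=\omega_n$ and let $I$ index the factors, so $|I|=n+2$. By Corollary~\ref{CCeder} (iterated $|I|-1$ times) it suffices to exhibit an $\omega$-resolvable subspace inside each product of nonempty open sets; since open subsets of isodyne spaces are again isodyne of full cardinality, I may as well hunt for an $\omega$-resolvable subspace of $\prod_{i\in I}X_i$ itself. If some factor contains an $\omega$-resolvable subspace we are done, because then $\prod_i X_i$ contains one as well (multiply the witnessing dense sets by the remaining factors). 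Otherwise Proposition~\ref{PFork}, applicable since $\kappa$ is a regular cardinal, lets me replace each $X_i$ by a $\kappa$-carvable subspace and fix a $\kappa$-carving $\mathcal A_i=\{A^i_\alpha:\alpha<\kappa\}$ on it.

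Next I would reconstruct the cell machinery of Lemma~\ref{LCarving}. For every injection $p\in\kappa_{1-1}^I$ set $C_p=\prod_{i\in I}A^i_{p(i)}$; condition (C2) guarantees that the union $E=\bigsqcup\{C_p:p\in\kappa_{1-1}^I\}$ of the \emph{injective} cells is dense in $\prod_i X_i$ (given an open box, (C2) provides $\kappa$ many carving sets meeting each side, so distinct indices can be chosen coordinatewise). It is therefore enough to make $E$ itself $\omega$-resolvable through Proposition~\ref{PResFromFunc}. The function $F$ of Lemma~\ref{LBristling} governs a flow on cells: declare $C_p\mapsto C_q$ iff $q\in F(p)$ and $||q||>||p||$. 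Property~(4) of Proposition~\ref{PBristling4} then forces $m(q)$ to be the direction of $F(p)$, so $q$ differs from $p$ only in the coordinate $h(p)$; property~(2) keeps that direction off the current maximum, and $||q||>||p||$ makes $\mapsto$ well-founded with $||\cdot||$ as the witnessing rank. The topological input is that each point of $C_p$ is a limit of $\bigsqcup\{C_q:C_p\mapsto C_q\}$: in the factor $X_{h(p)}$ the set $A^{h(p)}_{p(h(p))}$ meets, by (C2), $\kappa$ many carving sets of arbitrarily large index, while the other coordinates stay fixed.

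The decisive point is the cardinal arithmetic encoded in condition (3). For $\kappa=\omega_n$ the inequality $|F^{-1}(L)|^{+n}<\omega_n$ can hold only when $F^{-1}(L)$ is \emph{finite}, so every cell has only finitely many $\mapsto$-predecessors; in fact, by (4) they all lie in the single set $F^{-1}(L_0)$, where $L_0$ is the fiber through $q$ in direction $m(q)$. This is precisely where Hausdorffness, rather than regularity, suffices. In Theorem~\ref{TReg} the sets competing to be separated formed an infinite $\kappa$-rare family, forcing the use of $\omega_1$-disentanglement (Proposition~\ref{PSeparatingCountableSets}); here only the finitely many predecessor cells of each node ever need to be kept apart, and finite families can be separated in any Hausdorff space. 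Using this finiteness I would thin $\mapsto$ to a genuine forest: assign to each $C_q$ one predecessor, distributing the $\kappa$ many successors of a given cell among its finitely many sources so that each source keeps a share that is still (C2)-dense in direction $h(p)$. With unique predecessors the rank $\varphi$ defined by $\varphi(q)=\varphi(p)+1$, and $\varphi=0$ at sources, is $\omega$-valued, and by construction every point of a cell is a limit of points whose $\varphi$-value is exactly one larger; Proposition~\ref{PResFromFunc} then delivers $\omega$-resolvability of $E$.

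The main obstacle is this final thinning. The raw relation $q\in F(p)$, $||q||>||p||$ has finite but genuinely greater-than-one in-degree—already visible for two countable factors, where the horizontal fiber at index $\alpha$ has the $\alpha$ sources $\{(\alpha,\beta):\beta<\alpha\}$—so the naive rank fails the exact ``$+1$'' hypothesis of Proposition~\ref{PResFromFunc} (a low-rank source would see only higher-rank successors). Choosing a unique predecessor for every node simultaneously, while keeping each node's successor-share dense and checking that the Hausdorff separation of the finite predecessor families stays coherent along the $\kappa$-recursion, is the part demanding real care; the remainder is bookkeeping parallel to Lemma~\ref{LCarving}.
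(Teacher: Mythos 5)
Your high-level strategy---route points along the fibers produced by Lemma \ref{LBristling}, use condition (3) with $\kappa=\omega_n$ to get finiteness of $F^{-1}(L)$, and finish via Proposition \ref{PResFromFunc}---is the paper's strategy, but the stage you set it on (Corollary \ref{CCeder} plus Proposition \ref{PFork} plus carvings, imitating Lemma \ref{LCarving}) imports machinery that creates two genuine gaps. First, the separation step is unsound: your predecessor ``cells'' $C_p=\prod_i A^i_{p(i)}$ are products of carving pieces, i.e.\ pairwise disjoint \emph{sets} of size possibly close to $\kappa$, and your justification that ``finite families can be separated in any Hausdorff space'' is false for sets --- Hausdorffness separates finitely many \emph{points}, not finitely many disjoint infinite sets (this is exactly why the $\omega_1$ case needed regularity and Proposition \ref{PSeparatingCountableSets}). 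Second, the ``thinning'' of $\mapsto$ to a unique-predecessor relation, which you yourself flag as the main obstacle, is not bookkeeping but the place where this approach breaks: you must distribute the successor cells lying on a fiber $L$ among the finitely many predecessors in $F^{-1}(L)$ so that each predecessor keeps a share whose union still clusters at every point of its cell. But the carvable subspaces supplied by Proposition \ref{PFork} live inside hereditarily irresolvable spaces, and splittings of a carving of this kind are precisely what hereditary irresolvability obstructs: for instance, a partition of a $\kappa$-carving $\mathcal{A}$ into two subfamilies each satisfying condition (C2) of Definition \ref{DefCarving} would present $\bigsqcup\mathcal{A}$ as a union of two disjoint dense subsets, contradicting hereditary irresolvability. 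So there is no reason the distribution you need should exist. (A smaller point: Proposition \ref{PFork} requires regular \emph{uncountable} cardinality, so your reduction says nothing for $n=0$.)

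The paper's proof avoids all of this by never leaving the level of points. Identify the underlying set of each factor with $\kappa$ itself; then the product has underlying set $\kappa^I$, the injective points $\kappa^I_{1-1}$ form a dense subspace (each factor is isodyne), and $F$ from Lemma \ref{LBristling} is applied directly to points of the product. Now $F^{-1}(L)$ is a finite set of \emph{points}, so Hausdorffness legitimately gives pairwise disjoint open neighborhoods $O(p)$, $p\in F^{-1}(L)$, and one sets $U(p)=O(p)\cap\{q\in F(p)\cap\kappa^I_{1-1}: ||q||>||p||\}$ and declares $p\mapsto q$ iff $q\in U(p)$. Uniqueness of predecessors is then automatic rather than something to engineer: by Proposition \ref{PBristling4} every predecessor of $q$ lies in $F^{-1}(L_0)$ for the single fiber $L_0$ through $q$ with direction $m(q)$, and the corresponding sets $O(p)$ are pairwise disjoint. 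Meanwhile $p$ remains a limit of $U(p)$, because the tail $\{q\in F(p): ||q||>||p||\}$ clusters at $p$ (isodyneness of the factor in the direction of $F(p)$) and $O(p)$ is an open set containing $p$. With in-degree at most one and each point a limit of its successor set, $\varphi$ satisfies the exact ``$+1$'' hypothesis and Proposition \ref{PResFromFunc} applies to the dense subspace $\kappa^I_{1-1}$. In short, the idea your proposal is missing is that taking the factors' underlying sets to be $\kappa$ lets the combinatorics of Lemma \ref{LBristling} act on points, which turns both of your problematic steps (separating cells, thinning the relation) into trivialities.
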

\begin{proof}
	Denote $I = n + 2$, $\kappa = \omega_n$ and suppose we are given Hausdorff isodyne topologies $\tau_i$ on $\kappa$ for all $i \in I$.
	
	Take the function $F$ from Lemma \ref{LBristling}.
	It follows from (3) that the set $F^{-1}(L)$ is finite for every $L \in \mathbb{L}(\kappa^I)$.
	Thus, we can choose pairwise disjoint neighborhoods $O(p)$ of the points $p$ for $p \in F^{-1}(L)$.
	Define $U(p) = O(p) \cap \{q \in L \cap \kappa_{1-1}^I : ||q|| > ||p||\}$.
	
	For all $p,q \in \kappa_{1-1}^I$ we write $p \mapsto q$ iff $q \in U(p)$.
	The relation $\mapsto$ is well-founded, since $p \mapsto q$ implies $||p|| < ||q||$.
	It follows from Proposition \ref{PBristling4} and choice of the sets $U(p)$ that for every $q \in \kappa_{1-1}^I$ there is at most one $p \in \kappa_{1-1}^I$ such that $p \mapsto q$.
	Therefore, we can define the function $\varphi : \kappa_{1-1}^I \to \omega$ in the following way:
	$$\varphi(q) = \begin{cases}
		0, \; \text{there is no $p$ such that $p \mapsto q$}; \\
		\varphi(p) + 1, \; p \mapsto q.
	\end{cases}$$
	
	Thus, by Proposition \ref{PResFromFunc} the subspace $\kappa_{1-1}^I$ is $\omega$-resolvable.
	Since $\kappa_{1-1}^I$ is dense, the space $\prod_{i \in I} (\kappa, \tau_i)$, is $\omega$-resolvable as well.
\end{proof}

\begin{question}
	Is there a Hausdorff (regular,\dots) isodyne space $X$, a cardinal $\kappa \geq 3$ and a natural number $n \geq 2$ such that the space $X^{n+1}$ is $\kappa$-resolvable, whereas the space $X^n$ is not $\kappa$-resolvable?
\end{question}

Although the techniques of this and previous sections are compatible, it seems insufficient to prove $\omega$-resolvability of the product of three regular isodyne spaces of cardinality $\omega_2$.

\medskip

\noindent\textbf{Acknowledgements.} The author is grateful to Vladislav Smolin and the referee for useful remarks.

The work was supported by the Foundation for the Advancement of Theoretical Physics and Mathematics “BASIS”.


\begin{thebibliography}{10}

\bibitem{BL1996} A.~Be{\v s}lagi{\' c}, R.~Levy, Irresolvable products, {\it Papers on general topology and applications
	(Gorham, ME, 1995), Ann. New York Acad. Sci.}, {\bf 806} (1996), 42–-48.

\bibitem{BhaskaraRao2019} K.P.S. Bhaskara Rao A note on resolvability, {\it Acta Mathematica Hungarica}, {\bf 159} (2019), 669--673.

\bibitem{Ceder1964} J.G.~Ceder, On maximally resolvable spaces, {\it Fundamenta Mathematicae}, {\bf 55} (1964), 87--93.

\bibitem{Illanes1996} A. Illanes, Finite and $\omega$-resolvability, {\it Proceedings of the American mathematical society}, {\bf 124}:4 (April 1996), 1243--1246.

\bibitem{JSS2023} I.~Juh{\'a}sz, L.~Soukup, Z.~Szentmikl{\'o}ssy, On resolvability of products, {\it Fundamenta Mathematicae} {\bf 260} (2023), 281--295.

\bibitem{Lipin2025} A.E.~Lipin, Resolvability in products and squares, {\it Acta Mathematica Hungarica}, {\bf 177} (2025), 64--72.
DOI: 10.1007/s10474-025-01565-9

\bibitem{Pavlov2002} O. Pavlov, On resolvability of topological spaces, {\it Topology and its Applications}, {\bf 126}:1-2 (2002), 37--47.

\bibitem{Pavlov2007} O. Pavlov, Problems on (ir)resolvability, {\it Open Problems in Topology II} (2007), Elsevier B.V.
\end{thebibliography}

\end{document}